\newcommand{\opn}[1]{\operatorname{#1}}
 \let\glb@currsize\relax
\NewDocumentCommand{\definealphabet}{mmmm}
 {
  \int_step_inline:nnn { `#3 } { `#4 }
   {
    \cs_new_protected:cpx { #1 \char_generate:nn { ##1 }{ 11 } }
     {
      \exp_not:N #2 { \char_generate:nn { ##1 } { 11 } }
     }
   }
 }
\newcommand{\lto}{\longrightarrow}
\newcommand{\pparen}[1]{\left(\!\left(#1\right)\!\right)}
\newcommand{\bbrac}[1]{\llbracket #1 \rrbracket}
\Crefname{theorem}{Theorem}{Theorem}
\Crefname{conjecture}{Conjecture}{Conjectures}
\Crefname{lemma}{Lemma}{Lemmas}
\Crefname{definition}{Definition}{Definitions}
\Crefname{remark}{Remark}{Remarks}
\Crefname{proposition}{Proposition}{Propositions}
\Crefname{corollary}{Corollary}{Corollaries}
\Crefname{equation}{}{}
\Crefname{item}{}{}
\Crefname{example}{Example}{Examples}
\Crefname{proof}{Proof}{Proofs}
\renewcommand*\subjclass[2][2020]{%
  \def\@subjclass{#2}%
  \@ifundefined{subjclassname@#1}{%
    \ClassWarning{\@classname}{Unknown edition (#1) of Mathematics
      Subject Classification; using '2020'.}%
  }{%
    \@xp\let\@xp\subjclassname\csname subjclassname@#1\endcsname
  }%
}
\let\@xp\subjclassname\csname subjclassname@2020\endcsname
\newtheorem{theorem}{Theorem}[section]
\newtheorem{example}[theorem]{Example}
\newtheorem{lemma}[theorem]{Lemma}
\newtheorem{remark}[theorem]{Remark}
\newtheorem{proposition}[theorem]{Proposition}
\newtheorem{definition}[theorem]{Definition}
\newtheorem{conjecture}[theorem]{Conjecture}
\newtheorem{corollary}[theorem]{Corollary}
\newtheorem{introthm}{Theorem}
\newtheorem{introconj}[introthm]{Conjecture}
\title{On the $p$-adic transcendence of $\sum_{k=1}^\infty p^{-1/p^k}$}
\author{Shanwen Wang\orcidlink{0000-0003-0228-1208}}
\address{School of Mathematics, Renmin University of China, No. 59 Zhongguancun Street, Haidian District, Beijing, 100872, China}
\email{s\_wang@ruc.edu.cn}
\author{Yijun Yuan\orcidlink{0000-0001-6571-6980}}
\address{Institute for Theoretical Sciences, Westlake University, No. 600 Dunyu Road, Sandun town, Xihu district, Hangzhou, Zhejiang Province, 310030, China}
\email{941201yuan@gmail.com}
\urladdr{https://yijunyuan.github.io/}
\begin{document}
\frontmatter
\begin{abstract}
	Let $p$ be a prime number. In this article, we prove that the $p$-adic Hahn series $\sum_{k=1}^\infty p^{-1/p^k}$, which is the mixed-characteristic analogue of Abhyankar's solution $\sum_{k=1}^\infty t^{-1/p^k}$ to the Artin-Schreier equation $X^p-X-t^{-1}=0$ over $\bfF_p\pparen{t}$, is a $p$-adic complex number, but not a $p$-adic algebraic number. Based on this result, we formulate a conjecture about the possible order type of the support of an algebraic $p$-adic Hahn series and prove that it is implied by a tentative observation of Kedlaya.
\end{abstract}
\subjclass{11J81, 11J61, 11D88}
\keywords{$p$-adic transcendence, $p$-adic Hahn series}
\maketitle
\tableofcontents
\mainmatter
\section{Introduction}
In analogy with classical transcendental number theory, which concerns algebraicity over $\bfQ$, the study of function fields and $p$-adic fields has likewise developed into an important and well-established branch of transcendental number theory.

When $k$ is an algebraically closed field of characteristic $0$, the Puiseux-Newton theorem states that the algebraic closure of $k\pparen{t}$ is precisely the field of Puiseux series $k\pparen{t^{1/\infty}}=\bigcup_{n=1}^\infty k\pparen{t^{1/n}}$. In contrast, the structure of the algebraic closure becomes significantly more intricate when the base field $k$ has positive characteristic. For example, Chevalley showed in \cite[64]{chevalleyIntroductionTheoryAlgebraic1951} that the Artin-Schreier equation
\begin{equation}\label{eq:44045}
	X^p-X-t^{-1}=0
\end{equation}
over $\bfF_p\pparen{t}$ (and even $\overline{\bfF}_p\pparen{t}$) has no solution in the field of Puiseux series $\overline{\bfF}_p\pparen{t^{1/\infty}}$. On the other hand, Abhyankar observed in \cite{abhyankarTwoNotesFormal1956} that the formal series
\begin{equation}\label{eq:28270}
	\fraka\coloneqq \sum_{k=1}^\infty t^{-1/p^k},
\end{equation}
which lives in the field of Hahn series $\overline{\bfF}_p\pparen{t^\bfQ}$ (cf. \Cref{eg:44541}), is a root of the above equation. This example demonstrates that the framework of Hahn series offers an appropriate setting for investigating transcendental number theory over local fields of equal characteristic $p>0$. In 2001, Kedlaya gives a sufficient and necessary condition (cf. \cite[Theorem 8, Corollary 9]{kedlayaAlgebraicClosurePower2001}\footnote{See also \cite[Remark 2.9]{kedlayaAlgebraicityGeneralizedPower2017b} for the critical remark on \cite[Theorem 8]{kedlayaAlgebraicClosurePower2001}.}) for a Hahn series in $\overline{\bfF}_p\pparen{t^\bfQ}$ (resp. $\bfF_p\pparen{t^\bfQ}$) to be algebraic over $\overline{\bfF}_p\pparen{t}$ (resp. $\bfF_p\pparen{t}$) with the language of twist-recurrent sequences.

A comparable phenomenon is also observed in the mixed-characteristic context. As proved by Lampert (cf. \cite{lampertAlgebraicPadicExpansions1986}), Poonen (cf. \cite{poonenMAXIMALLYCOMPLETEFIELDS1993}) and Kedlaya (cf. \cite{kedlayaPowerSeriesPAdic2001}), for any prime number $p$, the field $\scrO_{\breve{\bfQ}_p}\pparen{p^\bfQ}$ of $p$-adic Hahn series (cf . \Cref{eg:44541}) with residue field $\overline{\bfF}_p$ and value group $\bfQ$ is the spherical completion of $\overline{\bfQ}_p$, which is algebraically closed and complete. By passing to Witt vectors, Kedlaya's criterion generalizes to verify whether a $p$-adic Hahn series lies in the \textbf{completed} integral closure of $\scrO_{\breve{\bfQ}_p}$, i.e. $\scrO_{\bfC_p}$ (cf. \Cref{coro:8415}). For example, we consider the $p$-adic analogue of Abhyankar's series \eqref{eq:28270}:
$$\frakA\coloneqq\sum_{k=1}^\infty p^{-1/p^k}\in\scrO_{\breve{\bfQ}_p}\pparen{p^\bfQ}.$$
Kedlaya's result allows us to prove
\begin{introthm}[cf. {\Cref{coro:8415}}]
	The $p$-adic Hahn series $\frakA$ lies in $\bfC_p$.
\end{introthm}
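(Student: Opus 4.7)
The plan is to apply the Witt-vector generalization of Kedlaya's criterion packaged in \Cref{coro:8415}, which characterizes when a $p$-adic Hahn series in $\scrO_{\breve{\bfQ}_p}\pparen{p^\bfQ}$ lies in $\scrO_{\bfC_p}$, and hence when an element of the ambient Hahn-series field lies in $\bfC_p=\scrO_{\bfC_p}[1/p]$ itself.

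First, I would revisit the parallel characteristic-$p$ situation in order to locate a suitable algebraic ``model'' on the tilt side. Using the additivity of the Frobenius in $\overline{\bfF}_p\pparen{t^\bfQ}$, Abhyankar's classical calculation
$$\fraka^p=\sum_{k=1}^\infty t^{-1/p^{k-1}}=t^{-1}+\fraka$$
shows that $\fraka$ is an Artin--Schreier root over $\bfF_p\pparen{t}$, hence algebraic. In particular, by Kedlaya's original theorem the coefficient sequence of $\fraka$ (all equal to $1$) together with the support $\{-1/p^k\}_{k\geq 1}$ satisfies the twist-recurrent condition associated to this geometric decay toward $0$.

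Second, I would invoke the tilting/Witt-vector bridge encoded in \Cref{coro:8415}: under this bridge, the uniformizer $p$ of $\scrO_{\bfC_p}$ tilts to an element playing the role of the variable $t$, and (after scaling $\frakA$ by a suitable power of $p$ to land in $\scrO_{\breve{\bfQ}_p}\pparen{p^\bfQ}$ with nonnegative valuation) membership of $\frakA$ in $\bfC_p$ is reduced to membership of its ``tilt'' in the completed algebraic closure of $\bfF_p\pparen{t}$ inside $\overline{\bfF}_p\pparen{t^\bfQ}$. Since the support $\{-1/p^k\}_{k\geq 1}$ and the coefficients (all $1$'s, with trivial Teichm\"uller lifts) of $\frakA$ coincide exactly with those of $\fraka$, the Artin--Schreier algebraicity of the first step supplies precisely what the criterion demands, so one concludes $\frakA\in\bfC_p$.

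The hard part, subsumed in \Cref{coro:8415} itself, is showing that the characteristic-$p$ twist-recurrent condition — transported via Witt vectors — really does detect $p$-adic approximability by elements of $\overline{\bfQ}_p$ in the mixed-characteristic setting. The subtlety is that addition of Hahn coefficients in mixed characteristic differs from its characteristic-$p$ counterpart by higher Witt-vector ``carry'' terms, so one must verify that these corrections do not spoil the recurrence that supports the approximation. For the specific series $\frakA$, however, the coefficients are constant and the exponents $-1/p^k$ decay rapidly to $0$, so once \Cref{coro:8415} is in place the verification essentially reduces to reading off the Artin--Schreier relation above.
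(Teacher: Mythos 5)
Your proposal is correct and follows essentially the same route as the paper: both reduce to Theorem~\ref{coro:8415}(2) by scaling $\frakA$ to nonnegative valuation and observing that the corresponding characteristic-$p$ series is integral over $\overline{\bfF}_p\bbrac{t}$ via Abhyankar's Artin--Schreier relation (the paper just records the explicit monic equation $X^p-t^{p-1}X-t^{p-1}=0$ satisfied by $t\fraka=\sum_{k\geq 1}t^{1-1/p^k}$, which is your relation $\fraka^p=t^{-1}+\fraka$ after multiplying through by $t^p$). Your parenthetical scaling step is exactly what is needed to pass from ``algebraic over $\overline{\bfF}_p\pparen{t}$'' to membership in $\widehat{\scrO}_L$, so there is no gap.
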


Owing to the completeness nature of Witt vectors, Kedlaya's criterion is unable to ascertain whether a $p$-adic Hahn series is algebraic over $\bfQ_p$. There are several necessaries conditions for a $p$-adic Hahn series to be algebraic over $\bfQ_p$. For example, in \cite{lampertAlgebraicPadicExpansions1986}, \cite{kedlayaPowerSeriesPAdic2001} and \cite{wang2024hyperalgebraicinvariantspadicalgebraic}, it is shown that if a $p$-adic Hahn series $\sum_{q\in\bfQ}[c_q]p^q$ is algebraic over $\bfQ_p$, then
\begin{enumerate}
	\item There exists $r\in\bfN_{\geq 1}$ such that $c_q\in\bfF_{p^r}$ for all $q\in\bfQ$;
	\item There exists $N\in\bfN_{\geq 1}$ such that the support $\{q\in\bfQ\mid c_q\neq 0\}$ is contained in $\frac{1}{N}\bfZ[p^{-1}]$;
	\item The accumulation points of the support $\{q\in\bfQ\mid c_q\neq 0\}$ are rational numbers.
\end{enumerate}
Note that the element $\frakA$ satisfies all above necessary conditions, and it is even $p$-quasi-automatic in the sense of Kedlaya \cite{kedlayaAlgebraicityGeneralizedPower2017b}. In contrast to Abhyankar's observation regarding the algebracity of $\fraka$, we establish the following result, which is counterintuitive:
\begin{introthm}[cf. {\Cref{thm:64044}}]
	The $p$-adic Hahn series $\frakA$ is transcendental over $\bfQ_p$.
\end{introthm}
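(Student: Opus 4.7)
Suppose for contradiction that $\frakA$ is algebraic over $\bfQ_p$. Then $K \coloneqq \bfQ_p(\frakA) \subseteq \bfC_p$ is a finite extension of ramification index $e < \infty$, and its value group equals $\tfrac{1}{e}\bfZ$. The plan is to derive a contradiction by exhibiting elements of $K$ whose $p$-adic valuations have, in lowest terms, denominator $p^k$ for every $k \geq 1$; this would force $p^k \mid e$ for all $k$, contradicting $e < \infty$.

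\textbf{Base step.} The leading Hahn term of $\frakA$ is $p^{-1/p}$, so $v_p(\frakA) = -1/p$, giving $p \mid e$.

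\textbf{First iteration.} Expanding $\frakA^p$ by the multinomial theorem,
\[
\frakA^p = \sum_{\substack{(n_l)_{l \geq 1}\\ \sum_l n_l = p}} \binom{p}{n_1, n_2, \ldots}\, p^{-\sum_l n_l/p^l},
\]
the ``concentrated'' tuples (one $n_l = p$, the rest zero) telescope via the identity $(p^{-1/p^l})^p = p^{-1/p^{l-1}}$ to exactly $p^{-1} + \frakA$; by Kummer's theorem every mixed multinomial coefficient is divisible by $p$. Therefore $\gamma \coloneqq \frakA^p - \frakA - p^{-1} \in K$ lies in $p \cdot \scrO_{\breve{\bfQ}_p}\pparen{p^\bfQ}$, and its smallest surviving Hahn exponent comes uniquely from $(n_1, n_2) = (p-1, 1)$ with multinomial coefficient $p$. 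After Teichmüller normalization, the leading term of $\gamma$ is $p^{1/p - 1/p^2}$ with coefficient $1$, so $v_p(\gamma) = (p-1)/p^2$, whose lowest-terms denominator is exactly $p^2$. Hence $p^2 \mid e$.

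\textbf{Iteration.} I would then iterate this ``Artin--Schreier correction'' to construct $\gamma_k \in K$ (polynomial expressions in $\frakA$ over $\bfQ_p$) with $v_p(\gamma_k)$ of lowest-terms denominator exactly $p^k$. Schematically, $\gamma_{k+1}$ arises from $\gamma_k^p$ by subtracting an appropriate $\bfQ_p$-linear combination of products of $\frakA, \gamma_2, \ldots, \gamma_k$ chosen to cancel every ``concentrated'' and ``partially concentrated'' contribution, leaving a remainder whose leading Hahn exponent has a strictly finer denominator. Existence of such a sequence produces $p^k \mid e$ for every $k$, the desired contradiction.

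\textbf{Main obstacle.} The induction is the heart of the argument. Beyond the first step, $\gamma_k^p$ receives contributions not merely from $\gamma_k$'s leading Teichmüller term but from every subleading term in its Hahn expansion, and each such term, on being raised to the $p$-th power, can produce spurious exponents whose denominators are \emph{coarser} than $p^{k+1}$. Identifying the correct algebraic combination to cancel all of these simultaneously -- by systematically tracking higher-order Kummer--Lucas $p$-divisibilities of multinomial coefficients and decomposing $\gamma_k$'s Teichmüller support into controllable pieces -- is the principal combinatorial challenge the proof must confront.
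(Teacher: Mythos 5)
Your base step and first correction are sound: $v_p(\frakA)=-1/p$ forces $p\mid e$, and the element $\gamma=\frakA^p-\frakA-p^{-1}$ does have $v_p(\gamma)=\frac{1}{p}-\frac{1}{p^2}$ (the tuple $(p-1,1,0,\dots)$ is the unique minimizer among mixed tuples, with multinomial coefficient exactly $p$), so $p^2\mid e$. (Minor slip: $\gamma$ lies in the maximal ideal but not in $p\cdot\scrO$, since its valuation is $<1$; this does not affect the computation.) However, the proof as written has a genuine gap at precisely the point you flag as the ``main obstacle'': the inductive construction of $\gamma_{k+1}$ from $\gamma_k$ is only described schematically, and the difficulty it must overcome is not a technicality. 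Once $k\geq 2$, the element $\gamma_k$ is no longer a single Teichm\"uller term plus higher-order noise in any controlled sense; $\gamma_k^p$ picks up cross-terms between \emph{all} pairs of terms in the Hahn expansion of $\gamma_k$, the carries in the Witt-vector arithmetic redistribute coefficients across exponents differing by integers, and there is no a priori reason the minimal-valuation term of the corrected remainder has denominator exactly $p^{k+1}$ rather than something coarser (or that the intended cancellation does not kill the remainder entirely). Without a proof that such a sequence $(\gamma_k)$ exists, the conclusion $p^k\mid e$ for all $k$ is not established, and the argument is incomplete.

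For comparison, the paper avoids any iteration. It assumes a polynomial relation $\sum_{i=0}^{n+1}s_i\frakA^i=0$, expands every power multinomially, indexes the exponents by tuples $\underline{k}\in\bigoplus\bfN$ via $\lambda(\underline{k})=-\sum_k a_k/p^k$, and groups terms according to the class of $\lambda(\underline{k})$ modulo $\bfZ$; a uniqueness lemma for the decomposition $\sum_{q\in\bfQ\cap[0,1)}c_q p^q$ with $c_q\in\breve{\bfQ}_p$ shows each group must vanish separately. Choosing the group of the reduced tuple $(1,\dots,1,0,\dots)$ with $n+1$ ones, a counting lemma ($\Sigma(\underline{a}_{\mathrm{red}})\leq\Sigma(\underline{a})$ with equality iff $\underline{a}$ is reduced) shows that only that single tuple can contribute, yielding $s_{n+1}(n+1)!=0$, a contradiction. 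In effect the paper replaces your infinite tower of ramification witnesses by one carefully chosen exponent class that isolates a single nonvanishing multinomial term; if you want to salvage your route, you would need a comparably sharp combinatorial control on which tuples can share an exponent class, at which point you have essentially reconstructed the paper's argument.
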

This finding highlights the fundamental distinction between transcendental number theory in the context of local fields with equal characteristic $p>0$ and that pertaining to local fields of mixed characteristic $(0,p)$.

Due to the inherent difficulty in establishing a general criterion for the transcendence of $p$-adic Hahn series over $\mathbf{Q}_p$, more tractable problems are proposed. For example, Lampert asked in \cite{lampertAlgebraicPadicExpansions1986} about the possible order type of the support of an $\bfQ_p$-algebraic $p$-adic Hahn series. Inspired by the transcendence of $\frakA$, which has bounded but infinite support, we formutate the following conjecture:
\begin{introconj}[cf. {\Cref{conj:finsupp}}]
	If a $p$-adic algebraic number has bounded support, then its support is finite.
\end{introconj}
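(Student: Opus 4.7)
My plan is to produce a conditional proof: taking Kedlaya's tentative observation as input, deduce that bounded support forces finite support. Let $x=\sum_{q\in\bfQ}[c_q]p^q\in\bfC_p$ be algebraic over $\bfQ_p$ with bounded support $S\deftobe\{q\in\bfQ:c_q\neq 0\}$. Assume for contradiction that $S$ is infinite. Since the support of a Hahn series is well-ordered, $S$ admits no strictly decreasing infinite sequence, and combined with boundedness, the Bolzano--Weierstrass property forces $S$ to possess at least one accumulation point, necessarily approached from below.

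I would then activate the three necessary conditions recalled before the conjecture: there exist $r,N\in\bfN_{\geq 1}$ such that $c_q\in\bfF_{p^r}$ for all $q$ and $S\subset\tfrac{1}{N}\bfZ[p^{-1}]$, while every accumulation point of $S$ is rational. Fix one such rational accumulation point $q^\star\in\bfQ$ and extract a strictly increasing sequence $(q_k)_{k\geq 1}\subset S$ with $q_k\nearrow q^\star$. This is the \emph{Abhyankar-type} pattern, mirroring the support $\{-1/p^k\}_{k\geq 1}$ of $\frakA$, which Theorem~B rules out in the specific case $q^\star=0$, $q_k=-1/p^k$.

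The heart of the proof is to invoke Kedlaya's tentative observation --- which I expect to be a recursive or ``twist-automatic'' structural description of the supports of algebraic $p$-adic Hahn series, analogous to the equicharacteristic twist-recurrence picture --- to constrain the tail $(q_k)$ near $q^\star$. The goal is to show that the rigidity imposed by the algebraic equation satisfied by $x$ (and transmitted by Kedlaya's observation) forces a local factor of $x$ near $q^\star$ to take an Abhyankar-type shape whose transcendence can be established by a bootstrap of the argument behind Theorem~B, contradicting the algebraicity of $x$. The principal obstacle is precisely this last step, namely extracting from Kedlaya's observation a rigidity statement strong enough to reduce arbitrary tails to an Abhyankar-type shape, and extending Theorem~B to cover the more general Abhyankar-type series that can arise. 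A subtler difficulty is that $S$ could a priori have many accumulation points of arbitrarily high Cantor--Bendixson rank within the bounded interval, so part of the work is a reduction --- via induction on the ordinal type of $S$ --- from the general case to the ``simplest'' one of a single accumulation point approached by a single Frobenius-orbit-like sequence.
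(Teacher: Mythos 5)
Your proposal has a genuine gap, and it starts from a misreading of the input you are allowed to assume. The ``tentative observation of Kedlaya'' that the paper uses (what it calls \emph{Kedlaya's prediction}, from the end of \cite[Section 4]{kedlayaPowerSeriesPAdic2001}) is not a twist-recurrent or automatic structural description of supports; it is the much blunter statement that the only order types that can occur as the support of a $p$-adic algebraic number are the finite ordinals, $\omega$, and $\omega^\omega$. Because you guess at a different (and unavailable) input, the core of your argument --- extracting an accumulation point, showing a ``local factor'' near it must have Abhyankar-type shape, and bootstrapping the transcendence proof of Theorem~B to that local factor --- is left entirely unexecuted. You say so yourself: ``the principal obstacle is precisely this last step.'' An argument whose central step is announced as an open obstacle is not a proof, and the step you would need (a transcendence theorem for all bounded-support tails accumulating at a rational, plus an induction on Cantor--Bendixson rank) is far harder than the conjecture itself.

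The paper's conditional proof (\Cref{prop:46136}) is instead a short ordinal-arithmetic trick that needs none of this analysis. If $x$ is algebraic with bounded support of order type $\alpha$ and $\opn{diam}(\opn{Supp}(x))<N$, then $x/(1-p^N)=\sum_{t\geq 0}x\cdot p^{Nt}$ is again algebraic and its support has order type $\alpha\cdot\omega$, which by Cantor normal form equals $\omega^{\beta_1+1}$ where $\beta_1$ is the leading exponent of $\alpha$. Kedlaya's prediction forces $\alpha\cdot\omega$ to be finite, $\omega$, or $\omega^\omega$: the first gives $\alpha=0$, the second gives $\beta_1=0$ so $\alpha$ is finite, and the third is impossible because $\omega^{\beta_1+1}=\omega^\omega$ would make $\omega$ a successor ordinal. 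I would encourage you to notice that the self-similarity device $x\mapsto x/(1-p^N)$, which the paper already uses in the discussion preceding the conjecture, is what converts ``bounded support'' into a constraint on order types that the prediction can see directly; no local analysis at accumulation points is needed.
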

We will prove in \Cref{prop:46136} that this conjecture is implied by a hypothetical observation by Kedlaya in \cite{kedlayaPowerSeriesPAdic2001}. It is anticipated that this conjecture will constitute a foundational advancement in the study of the $\mathbf{Q}_p$-transcendence properties of $p$-adic Hahn series.


\section{Preliminaries on Hahn series}
To make this article self-contained, we briefly recall some basic facts about Hahn series.

\begin{definition}[{\cite[Section 3]{poonenMAXIMALLYCOMPLETEFIELDS1993}}]\label{def:61947}
	Let $R$ be a commutative ring and $G$ be an ordered group.
	\begin{enumerate}
		\item For any $f\in\opn{Hom}_{\opn{Set}}(G,R)$, we define the \textbf{support} of $f$ to be $$\opn{Supp}(f)=\{g\in G\colon f(g)\neq 0\}.$$
		\item Define the set of \textbf{Hahn series} over $R$ with value group $G$ to be
		      $$R\pparen{G}\coloneqq \{f\in \opn{Hom}_{\opn{Set}}(G,R)\colon \opn{Supp}(f) \text{ is well-ordered}\}.$$
		      By introducing a formal variable $t$, elements in $R\pparen{G}$ will also be written as $\sum_{g\in G}r_gt^g$, where $r_g\in R$ for all $g\in G$.
	\end{enumerate}
\end{definition}
\begin{proposition}[{\cite[Lemma 1,Corollary 2]{poonenMAXIMALLYCOMPLETEFIELDS1993}}]\label{prop:40704}
	Let $R$ be a commutative ring and $G$ be an ordered group.
	\begin{enumerate}
		\item With identity $1\cdot t^0$ and addition as well as multiplication given by
		      $$\sum_{g\in G}a_g t^g+\sum_{g\in G}b_g t^g\coloneqq\sum_{g\in G}(a_g+b_g)t^g,\ \sum_{g\in G}a_g t^g\cdot\sum_{g\in G}b_g t^g\coloneqq\sum_{g\in G}\left(\sum_{h\in G}a_h b_{g-h}\right)t^,$$
		      $R\pparen{G}$ forms a commutative ring.
		\item If $R$ is a field, then so does $R\pparen{G}$. Moreover, with the map
		      $$v\colon R\pparen{G}\lto G\cup\{\infty\},\ f\longmapsto \begin{cases}\min\opn{Supp}(f),& \text{ if }f\neq 0\\\infty,& \text{ if }f=0\end{cases},$$
		      $R\pparen{G}$ becomes a valued field with value group $G$ and residue field $R$.
	\end{enumerate}
\end{proposition}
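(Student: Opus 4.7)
The plan is to reduce everything to a combinatorial fact about well-ordered subsets of $G$ (Neumann's lemma), after which both assertions become routine. For any two well-ordered subsets $A,B\subseteq G$, I would first establish: (i) the sumset $A+B$ is again well-ordered; (ii) for every $g\in G$, the fibre $\{(a,b)\in A\times B\colon a+b=g\}$ is finite. Both are proved together using the characterization of well-orderedness as the absence of infinite strictly decreasing sequences: if (ii) fails one extracts such a sequence in $A$ or $B$, and (i) then follows readily from (ii) together with the observation that every nonempty subset of $A+B$ has a minimum.

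For assertion (1), applying Neumann's lemma to $A=\opn{Supp}(f)$ and $B=\opn{Supp}(f')$ shows that the convolution $\sum_h a_h b_{g-h}$ is a finite sum for every $g$, and that the resulting element lies in $R\pparen{G}$ since its support is contained in the well-ordered set $A+B$. Commutativity, associativity and distributivity then follow from formal manipulation of double sums, with the swaps of summation order justified by the finiteness in (ii).

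For assertion (2), given $0\neq f=\sum a_g t^g$ with $R$ a field, set $g_0=v(f)$ and write $f=a_{g_0}t^{g_0}(1-h)$ where $h\in R\pparen{G}$ has $\opn{Supp}(h)\subseteq G_{>0}$. The crucial step is to show that $u\coloneqq\sum_{n\geq 0}h^n$ defines an element of $R\pparen{G}$: since $\opn{Supp}(h^n)\subseteq n\cdot\opn{Supp}(h)$ and $\opn{Supp}(h)\subseteq G_{>0}$ is well-ordered, a bounded-below refinement of Neumann's lemma shows that only finitely many $n$ contribute at any fixed $g\in G$ and that the total support $\bigcup_n\opn{Supp}(h^n)$ remains well-ordered; a formal telescoping then gives $u(1-h)=1$, so $f^{-1}=a_{g_0}^{-1}t^{-g_0}u$. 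The valuation properties follow immediately: $v(f+f')\geq\min(v(f),v(f'))$ from the additive formula, and $v(ff')=v(f)+v(f')$ because $(v(f),v(f'))$ is the unique pair in $\opn{Supp}(f)\times\opn{Supp}(f')$ summing to $v(f)+v(f')$, so the coefficient $a_{v(f)}b_{v(f')}$ is nonzero in the field $R$. The main obstacle is Neumann's lemma together with its bounded-below refinement used for inversion; once these are in hand, the remaining work is bookkeeping.
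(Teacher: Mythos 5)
The paper gives no proof of this proposition at all — it is quoted directly from Poonen's \emph{Maximally complete fields} (Lemma 1, Corollary 2) — and your argument is precisely the standard one found there: Neumann's lemma on sumsets of well-ordered sets (well-orderedness of $A+B$ plus finiteness of the fibres $\{(a,b)\colon a+b=g\}$) for the ring structure, and geometric-series inversion of $1-h$ with $\opn{Supp}(h)\subseteq G_{>0}$ for the field structure. The outline is correct; the one substantive ingredient you defer — that $\bigcup_{n}\opn{Supp}(h^n)$ is well-ordered with only finitely many $n$ contributing at each $g$ — is genuinely the hardest step (it requires a minimal-bad-sequence or Ramsey-type argument and does not follow formally from the two-factor case), but you state it accurately and correctly identify it as the crux.
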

Note that $\opn{char}R\pparen{G}=\opn{char}R$, we call $R\pparen{G}$ the \textbf{equal-characteristic field of Hahn series} over $R$ with value group $G$, also denoted as $R\pparen{t^G}$ with respect to the formal variable $t$.

\begin{proposition}[{\cite[Proposition 3, Corollary 3, Proposition 5]{poonenMAXIMALLYCOMPLETEFIELDS1993}}]\label{prop:49318}
	Let $k$ be a perfect field of characteristic $p$ and $G$ be an ordered group containing $\bbZ$ as a subgroup. Besides that, let
	$$\calN\coloneqq \left\{\sum_{g\in G}r_g t^g\in W(k)\pparen{t^G}\colon \text{ for every } g\in G,\ \sum_{n\in\bbZ}r_{g+n}p^n=0\right\},$$
	where $W(k)$ is the ring of Witt vectors of $k$. Then
	\begin{enumerate}
		\item $\calN$ is a maximal ideal of $W(k)\pparen{t^G}$, which makes $W(k)\pparen{p^G}\coloneqq W(k)\pparen{t^G}/\calN$ a field\footnote{Intuitively speaking, $W(k)\pparen{p^G}$ is obtained by replacing the formal variable $t$ of elements in $W(k)\pparen{t^G}$ by the prime $p$.}, called the \textbf{$p$-adic field of Hahn series}.
		\item Every element in $W(k)\pparen{p^G}$ can be uniquely written as $$\sum_{g\in G}[r_g]p^g,$$
		      where $r_g\in k$ for all $g\in G$ and $[\cdot]\colon k\lto W(k)$ is the Teichmüller lift. We call this the \textbf{standard expansion} of the element.
		\item For $f=\sum_{g\in G}[r_g]p^g$, define the \textbf{support} of $f$ to be
		      $$\opn{Supp}(f)=\{g\in G\colon r_g\neq 0\}.$$
		      Then the map
		      $$v\colon W(k)\pparen{G}/\calN\lto G\cup\{\infty\},\ f\mapsto \begin{cases}
				      \min\opn{Supp}(f), & \text{ if }f\neq 0 \\\infty,& \text{ if }f=0
			      \end{cases}$$
		      makes $W(k)\pparen{G}/\calN$ a mixed-characteristic valued field with value group $G$ and residue field $k$.
	\end{enumerate}
\end{proposition}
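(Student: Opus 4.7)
The plan is to construct the target field $W(k)\pparen{p^G}$ \emph{directly} from formal Teichm\"uller expansions and then identify it with the quotient $W(k)\pparen{t^G}/\calN$ via an ``evaluation at $t=p$'' homomorphism; parts~(1)--(3) will then follow simultaneously. Introduce the auxiliary set $F$ of formal expressions $\sum_{g\in G}[c_g]p^g$ with $c_g\in k$ and well-ordered support, equipped with the valuation $v(\sum[c_g]p^g)=\min\{g:c_g\neq 0\}$. Define addition and multiplication on $F$ by first performing the naive operation on coefficients, then iteratively re-expanding each resulting Witt vector $r\in W(k)$ as $r=\sum_n[a_n]p^n$ with $a_n\in k$ and collecting like powers of $p$. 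Neumann's lemma---that the sumset of two well-ordered subsets of $G$ is well-ordered with only finitely many representations of each element---guarantees that all internal sums are finite and that supports remain well-ordered; a transfinite induction shows that the ``carry redistribution'' stabilizes and produces a valid element of $F$. The standard Poonen argument then shows $F$ is a field: any nonzero $f=[c_{g_0}]p^{g_0}(1+u)$ with $v(u)>0$ is inverted as $[c_{g_0}]^{-1}p^{-g_0}\sum_{n\geq 0}(-u)^n$, the geometric series converging in $F$ by well-orderedness.

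Next, define $\phi\colon W(k)\pparen{t^G}\to F$ by Witt-expanding each coefficient $r_g=\sum_{n\geq 0}[b_{g,n}]p^n$ of $f=\sum_g r_g t^g$ and setting $\phi(f)\coloneqq\sum_{g,n}[b_{g,n}]p^{g+n}\in F$; this lands in $F$ because the support is contained in $\opn{Supp}(f)+\bbN$, which is well-ordered by Neumann's lemma. That $\phi$ is a ring homomorphism reduces to (i) the compatibility of Witt arithmetic with the carry procedure, and (ii) a Fubini-type swap of the double sum in the multiplication formula, legitimate because products in $W(k)\pparen{t^G}$ have well-ordered support. To identify $\ker\phi$ with $\calN$, fix a coset $g_0+\bbZ\subset G$ and observe that the contribution to $\phi(f)$ from this coset equals $p^{g_0}\cdot\sum_k s_k p^k$, where $\sum_k s_k p^k=\sum_{m,n\geq 0}[b_{g_0+m,n}]p^{m+n}$ is precisely $\sum_{n}r_{g_0+n}p^n\in W(k)[1/p]$; by uniqueness of Witt expansions this vanishes iff that sum is zero, which is the defining condition of $\calN$. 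Surjectivity of $\phi$ is immediate. By the first isomorphism theorem, $W(k)\pparen{t^G}/\calN\cong F$ is a field (part~(1)), each class has a unique standard expansion in $F$ (part~(2)), and the valuation $v$ on $F$ descends to the valuation of part~(3) with value group $G$ and residue field $k$.

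The main obstacle is the bookkeeping: verifying that the iterated carry-redistribution procedure defining addition, multiplication on $F$, and the map $\phi$ is well-defined and coherent. Concretely, one must argue by transfinite induction that the partial sums produced during carry propagation retain well-ordered support, and that at each fixed degree $h\in G$ the carries stabilize after only finitely many steps. Once this technical core is in place, the identification $\ker\phi=\calN$, the uniqueness of the standard expansion, and the valued-field structure all follow by direct coefficient-by-coefficient comparison with the unique $p$-adic expansion in $W(k)$.
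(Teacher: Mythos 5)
The paper offers no proof of this proposition at all: it is imported verbatim from \cite{poonenMAXIMALLYCOMPLETEFIELDS1993}, so the relevant comparison is with Poonen's argument there, and your route differs from it in architecture. Poonen works in the opposite order: he first proves directly that $\calN$ is an ideal of $W(k)\pparen{t^G}$, then shows that each coset modulo $\calN$ contains a unique representative in Teichm\"uller standard form; the quotient thereby inherits associativity, distributivity and the field property for free, and the valuation is read off from the canonical representatives. You instead build the target $F$ from scratch as a ring of formal Teichm\"uller expansions with carry arithmetic and then invoke the first isomorphism theorem along an evaluation map $\phi$. This is workable, and your coset-by-coset identification of $\ker\phi$ with $\calN$ (reducing to uniqueness of Teichm\"uller expansions in $W(k)[1/p]$) is exactly right; note also that the carry stabilization is not really a transfinite induction --- within each coset $g_0+\bbZ$ the whole computation takes place in the complete discretely valued ring $W(k)[1/p]$, and Neumann's lemma only enters to control supports across cosets. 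The genuine cost of your architecture is the step you defer as ``bookkeeping'': for $\phi$ to be a ring homomorphism you must first know that the carry-redistribution operations on $F$ are not only well defined but associative and distributive, and verifying those axioms directly for carry arithmetic is the bulk of the work, which your write-up does not address. If you want to keep your construction, the cleanest repair is to prove only that $\calN$ is an ideal and that $\phi$ is a bijection onto $F$ modulo $\calN$, and then \emph{define} the operations on $F$ by transport of structure, recovering the carry description afterwards as a computation rather than a definition.
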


The most important property of the field of Hahn series is the following:
\begin{theorem}[cf. {\cite[Theorem 1, Corollary 4, Corollary 6]{poonenMAXIMALLYCOMPLETEFIELDS1993}}]
	Let $F$ be an equal-characteristic (resp. mixed-characteristic) valued field with divisible value group $G$ and algebraically closed residue field $k$. Then the equal-characteristic (resp. $p$-adic) field of Hahn series $k\pparen{t^G}$ (resp. $W(k)\pparen{p^G}$) is the unique (up to isomorphisms of valued field) minimal spherically complete extension of $F$. Moreover, it is algebraically closed and complete.
\end{theorem}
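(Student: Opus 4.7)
The plan is to follow the Krull--Kaplansky--Poonen strategy in three steps: first exhibit $k\pparen{t^G}$ (resp.\ $W(k)\pparen{p^G}$) as spherically complete, then embed $F$ into it, and finally invoke Kaplansky's classification of maximal immediate extensions to obtain minimality, uniqueness, algebraic closedness, and completeness. Spherical completeness itself is direct: given a descending chain of balls $\{B(x_\alpha, r_\alpha)\}$ in the Hahn series field, I would construct a common point $x$ coefficient-wise, taking the $g$-th coefficient of $x$ in the standard expansion to be the value at which the $g$-th coefficient of $x_\alpha$ stabilizes once $r_\alpha > g$. The only nontrivial check is that $\opn{Supp}(x)$ is well-ordered, which follows from the containment $\opn{Supp}(x) \cap (-\infty, r_\alpha) \subseteq \opn{Supp}(x_\alpha)$ valid for each $\alpha$.

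To embed $F$, I would choose a splitting $G \hookrightarrow F^\times$ of the valuation (available because a divisible abelian group is injective as a $\bbZ$-module), together with a lift of the residue field: in equal characteristic this is a field section $k \hookrightarrow \scrO_F$ furnished by Cohen structure theory (using that $k$ is perfect), and in mixed characteristic it is the Witt-vector lift $W(k) \hookrightarrow \scrO_F$ supplied by perfectness of $k$ and the functoriality of Witt vectors as in \Cref{prop:49318}. These data determine a valuation-preserving embedding of $F$ into the Hahn series field. Algebraic closedness then follows from the classical Kaplansky--MacLane theorem that any spherically complete valued field with divisible value group and algebraically closed residue field is algebraically closed: given a monic $P(X)$ of positive degree, one uses the Newton polygon to build improving approximations $y_0, y_1, \dots$ with $v(P(y_n))$ strictly cofinal in $G \cup \{\infty\}$, and extracts a root as the common point of the resulting nested balls. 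Completeness in the metric sense is then a formal consequence of spherical completeness.

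Minimality and uniqueness reduce to showing that the Hahn series field is a \emph{maximal} immediate extension of $F$: every pseudo-Cauchy sequence in $F$ has a limit by spherical completeness, and conversely every element of the Hahn series field arises as such a limit, so no proper immediate extension can sit between $F$ and its Hahn completion. Kaplansky's uniqueness theorem for maximal immediate extensions --- whose hypotheses are precisely that $k$ is algebraically closed (so that no algebraic-type pseudo-Cauchy sequences remain unfilled) and $G$ is divisible --- then identifies the Hahn series field as \emph{the} such extension of $F$, unique up to valued-field isomorphism over $F$. The main obstacle I anticipate is the algebraic-closedness step in mixed characteristic: the Newton-polygon argument that runs cleanly in equal characteristic must be adapted to Witt-vector arithmetic, where the interaction between the residue characteristic $p$ and the valuations of coefficients of $P$ can stall the naive successive-approximation procedure; careful bookkeeping in the style of Poonen's Proposition~5 is required to keep $v(P(y_n))$ strictly cofinal and thereby guarantee that the spherically complete limit is an honest root.
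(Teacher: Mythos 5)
The paper does not actually prove this statement; it is quoted from Poonen (Theorem 1, Corollaries 4 and 6 of the cited reference) without proof, so your attempt can only be measured against that source. Your overall architecture --- spherical completeness of the Hahn field by a coefficientwise limit along a nested chain of balls, algebraic closedness via Newton-polygon successive approximation in a spherically complete field, and minimality plus uniqueness via Kaplansky's theory of maximal immediate extensions --- is indeed the Krull--Kaplansky--Poonen route, and those parts are sound at the level of a sketch.

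There is, however, a genuine gap in your embedding step. You propose to embed $F$ into the Hahn field by choosing a splitting $G\hookrightarrow F^\times$ of the valuation together with a lift of the residue field \emph{inside $F$ itself} (a Cohen section $k\hookrightarrow\scrO_F$, or $W(k)\hookrightarrow\scrO_F$ in mixed characteristic). Such sections need not exist for an arbitrary valued field $F$ satisfying the hypotheses: Cohen and Teichm\"uller sections require henselianness or completeness, which $F$ is not assumed to have. Concretely, in mixed characteristic take $F$ to be the subfield of $\overline{\bfQ}_p$ generated over $\bfQ$ by all prime-to-$p$ roots of unity and by $p^{1/n}$ for all $n\geq 1$; this $F$ is countable with residue field $\overline{\bfF}_p$ and divisible value group $\bfQ$, yet $W(\overline{\bfF}_p)$ is uncountable and so admits no embedding into $\scrO_F$ at all. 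The standard repair, and what the cited source in effect does, is not to embed $F$ directly: one first passes to a maximal immediate extension $M$ of $F$ (existence by Zorn's lemma together with Krull's cardinality bound), observes that $M$ is spherically complete and hence \emph{does} contain the required sections, and then identifies $M$ with the Hahn series field by Kaplansky's uniqueness theorem for maximal immediate extensions under hypothesis A --- equivalently, one invokes Kaplansky's embedding theorem rather than constructing the embedding by hand. With that correction the remainder of your outline (in particular the reduction of minimality and uniqueness to uniqueness of maximal immediate extensions, hypothesis A being satisfied since $k$ is algebraically closed and $G$ is divisible) goes through.
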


The following examples are the fields of Hahn series used in this article:
\begin{example}\label{eg:44541}
	Let $F=\overline{\bfF}_p\pparen{t}$ (resp. $\breve{\bfQ}_p=W(\overline{\bfF}_p)[p^{-1}]$), which has value group $\bfQ$ and residue field $\overline{\bfF}_p$. Then the field of equal-characteristic (resp. $p$-adic) Hahn series $\overline{\bfF}_p\pparen{t^\bfQ}$ (resp. $\scrO_{\breve{\bfQ}_p}\pparen{p^\bfQ}=W(\overline{\bfF}_p)\pparen{p^\bfQ}$) is the spherical completion of $F$ with residue field and value group unchanged, which is algebraically closed and complete.
\end{example}
\begin{remark}
	To simplify the statement, we will simply call $\scrO_{\breve{\bfQ}_p}\pparen{p^\bfQ}$ the $p$-adic Hahn series without specifying the residue field and value group.
\end{remark}

We end this section by proving the following lemma, which will be used in the proof of \Cref{thm:64044}:
\begin{lemma}\label{lem:12387}
	Every $p$-adic Hahn series can be uniquely written as $\sum_{q\in \bfQ\cap [0,1)}c_q\cdot p^q$, with $c_q\in \breve{\bfQ}_p$ for all $q$.
\end{lemma}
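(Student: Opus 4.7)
The plan is to reduce the claim to the standard Teichmüller--Witt expansion of \Cref{prop:49318}(2) by partitioning the support according to fractional parts. I would start from the expression $f = \sum_{g \in \bfQ} [r_g] p^g$ with $r_g \in \overline{\bfF}_p$ and well-ordered support $S \coloneqq \{g \in \bfQ : r_g \neq 0\}$, and use the fractional-part map $\bfQ \to \bfQ \cap [0,1)$, $g \mapsto \{g\}$. For each $q \in \bfQ \cap [0,1)$, the fiber $S_q \coloneqq S \cap (q + \bfZ)$ is a well-ordered subset of $q + \bfZ$; since $q+\bfZ$ is order-isomorphic to $\bfZ$, $S_q$ is bounded below. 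This makes
$$c_q \coloneqq \sum_{g \in S_q} [r_g]\, p^{g - q} \;\in\; W(\overline{\bfF}_p)[p^{-1}] = \breve{\bfQ}_p$$
a well-defined element of $\breve{\bfQ}_p$, convergence being in its $p$-adic topology.

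Next I would establish the equality $f = \sum_{q \in \bfQ \cap [0,1)} c_q p^q$ inside $\scrO_{\breve{\bfQ}_p}\pparen{p^\bfQ}$ by viewing each $c_q p^q$ through the canonical embedding $\breve{\bfQ}_p \hookrightarrow \scrO_{\breve{\bfQ}_p}\pparen{p^\bfQ}$. Its support is contained in $q + \bfZ$; the union of all these supports is exactly $S$, hence well-ordered by hypothesis; and any $g \in \bfQ$ appears in the support of at most one summand, namely the one indexed by $q = \{g\}$. So the family is summable, and regrouping the double sum recovers the original standard expansion of $f$.

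For uniqueness, suppose two expressions $\sum_q c_q p^q = \sum_q c'_q p^q$ agree. Writing each $c_q - c'_q$ via its Teichmüller expansion in $\breve{\bfQ}_p$ and multiplying by $p^q$ produces a standard expansion of $0$ in the sense of \Cref{prop:49318}(2); because the exponents coming from distinct $q \in \bfQ \cap [0,1)$ lie in disjoint cosets $q + \bfZ$, no cancellation across different $q$ is possible, and uniqueness of the standard Teichmüller--Witt expansion forces $c_q = c'_q$ for all $q$.

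The main obstacle --- such as it is --- is purely bookkeeping: verifying that the Hahn-series summability condition is respected by both grouping and ungrouping. This reduces to the elementary observations that $\bfQ = \bigsqcup_{q \in \bfQ \cap [0,1)} (q + \bfZ)$ and that a well-ordered subset of $\bfQ$ meets each coset in a set with a lower bound, so no genuinely new analytic input is needed beyond \Cref{prop:49318}.
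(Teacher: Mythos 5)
Your proposal is correct and follows essentially the same route as the paper: both decompose $\bfQ$ into the cosets $q+\bfZ$ for $q\in\bfQ\cap[0,1)$, define $c_q$ as the corresponding sub-sum of the standard expansion, and prove uniqueness by expanding each difference $c_q-c'_q$ in Teichmüller form and invoking the uniqueness of the standard expansion of $0$, noting that distinct $q$ contribute exponents in disjoint cosets. The only difference is that you spell out the summability bookkeeping (well-orderedness of each fiber, regrouping) slightly more explicitly than the paper does.
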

\begin{proof}
	Notice that $\bfQ\cap [0,1)$ is a set of representatives of $\bfQ/\bfZ$, every $p$-adic Hahn series $\sum_{q\in \bfQ}[a_q]p^q$ can be written as
	\begin{equation*}
		\sum_{q\in \bfQ}[a_q]p^q=\sum_{q\in \bfQ\cap[0,1)}\sum_{n\in\bfZ}[a_{q+n}]p^{q+n}=\sum_{q\in \bfQ\cap[0,1)}p^q\left(\sum_{n\in\bfZ}[a_{q+n}]p^n\right),
	\end{equation*}
	where $\sum_{n\in\bfZ}[a_{q+n}]p^n\in \breve{\bfQ}_p$ for all $q$.

	For the uniqueness, for any $p$-adic Hahn series $\sum_{q\in \bfQ}[a_q]p^q$, write
	$$\sum_{q\in \bfQ}[a_q]p^q=\sum_{q\in \bfQ\cap[0,1)}c_q\cdot p^q=\sum_{q\in \bfQ\cap[0,1)}d_q\cdot p^q$$
	with $c_q,d_q\in \breve{\bfQ}_p$ for all $q\in\bfQ\cap[0,1)$. Then we have
	$$0=\sum_{q\in\bfQ\cap[0,1)}(c_q-d_q)p^q.$$
	If we write $c_q-d_q=\sum_{n\in\bfZ}[s_{q,n}]p^n\in\breve{\bfQ}_p$ with $s_{q,n}\in \overline{\bfF}_p$ for all $n$ and for all $q\in\bfQ\cap[0,1)$, then
	\begin{equation}\label{eq:19959}
		0=\sum_{q\in\bfQ\cap[0,1)}\sum_{n\in\bfZ}[s_{q,n}]p^{q+n}.
	\end{equation}
	Since $q+n$ are all distinct for different pairs $(q,n)$, \Cref{eq:19959} is the standard expansion of $0\in\scrO_{\breve{\bfQ}_p}\pparen{p^\bfQ}$. Hence $s_{q,n}=0$ for all $q\in\bfQ\cap[0,1)$ and $n\in\bfZ$, i.e. $c_q=d_q$ for all $q\in\bfQ\cap[0,1)$.
\end{proof}

\section{$\frakA$ is a $p$-adic complex number}
Although the summation $\sum_{k=1}^\infty p^{-1/p^k}$ is not interpreted as the $p$-adic limit (which does not exist) of the suspicious sequence $\left\{\sum_{k=1}^n p^{-1/p^k}\right\}_{n\geq 1}\subset \overline{\bfQ}_p$, we can still prove that it is a $p$-adic limit of a sequence in $\overline{\bfQ}_p$. This is a direct consequence of the following result of Kedlaya:
\begin{theorem}[cf. {\cite[Theorem 13.4]{kedlayaAlgebraicityGeneralizedPower2017b}}]\label{coro:8415}
	The ring $\scrO_{\bfC_p}$ equals to the following sets:
	\begin{enumerate}
		\item the completion of the image of the $p$-quasi-automatic elements (cf. \cite[Definition 6.3, Definition 13.1]{kedlayaAlgebraicityGeneralizedPower2017b}) of $W\left(\overline{\bfF}_p\right)\pparen{t^\bfQ}^\wedge$ under the projection
		      $$W\left(\overline{\bfF}_p\right)\pparen{t^\bfQ}^\wedge\lto \scrO_{\breve{\bfQ}_p}\pparen{p^\bfQ}.$$
		\item the completion of the following set:
		      $$\left\{\sum_{q\in\bfQ}[c_q]p^q\in \scrO_{\breve{\bfQ}_p}\pparen{p^\bfQ}\middle\vert \sum_{q\in\bfQ}c_q\cdot t^q\in \widehat{\scrO}_{L}\right\},$$
		      where $\widehat{\scrO}_{L}$ is the completion of the integral closure of $\overline{\bfF}_p\bbrac{t}$ in $\overline{\bfF}_p\pparen{t^\bfQ}$.
	\end{enumerate}
\end{theorem}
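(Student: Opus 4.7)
The plan is to deduce both characterizations as reformulations of Kedlaya's Theorem 13.4 in \cite{kedlayaAlgebraicityGeneralizedPower2017b}, using his earlier algebraic-closure criterion in equal characteristic (\cite{kedlayaAlgebraicClosurePower2001}) to pass between the two formulations.

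For part (1), I would first identify the projection $W(\overline{\bfF}_p)\pparen{t^\bfQ}^\wedge \to \scrO_{\breve{\bfQ}_p}\pparen{p^\bfQ}$ with the $p$-adic completion of the quotient by the ideal $\calN$ constructed in \Cref{prop:49318}; intuitively, this is the specialization $t \mapsto p$. Kedlaya's Theorem 13.4 asserts exactly that, under this specialization, the image of the set of $p$-quasi-automatic elements of the completed Witt-vector Hahn series field is $p$-adically dense in $\scrO_{\bfC_p}$, which is the content of (1).

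For part (2), the bridge between the two settings is the Teichmüller correspondence $\sum c_q t^q \longleftrightarrow \sum [c_q] p^q$ between standard expansions (cf.\ \Cref{prop:49318}(2)) in $\overline{\bfF}_p\pparen{t^\bfQ}$ and in $\scrO_{\breve{\bfQ}_p}\pparen{p^\bfQ}$. This map preserves supports and respects the valuation, hence is a homeomorphism onto its image for the valuation topologies on both sides. By Kedlaya's equal-characteristic criterion, the integral closure of $\overline{\bfF}_p\bbrac{t}$ in $\overline{\bfF}_p\pparen{t^\bfQ}$ coincides with the set of $p$-quasi-automatic series of nonnegative support; the Teichmüller correspondence identifies these with the image under the projection in (1) of the corresponding $p$-quasi-automatic Witt-vector Hahn series. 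Taking completions on both sides and invoking (1) produces (2).

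The main obstacle is verifying that the Teichmüller correspondence intertwines the notions of $p$-quasi-automaticity used on the two sides of the picture; this compatibility is what permits one to pass between the Witt-vector formulation of (1) and the equal-characteristic formulation of (2), and requires an unwinding of the frobenius-twisted recurrences in Kedlaya's definition of $p$-quasi-automatic sequences. Once this is confirmed, both parts follow immediately from his theorem.
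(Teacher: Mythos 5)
The paper offers no proof of this statement at all: it is imported verbatim (up to rephrasing) from Kedlaya's Theorem 13.4, with only a follow-up remark clarifying what ``completed integral closure'' means in \cite{kedlayaAlgebraicityGeneralizedPower2017b}. Your proposal likewise reduces everything to that theorem, so the approach is essentially the same; the only caution is that the equivalence of formulations (1) and (2) is already part of Kedlaya's statement, so the ``main obstacle'' you identify --- intertwining $p$-quasi-automaticity across the two characteristics via the Teichm\"uller correspondence --- is his burden, not yours.
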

\begin{remark}
	In the original statement of \cite[Theorem 13.4]{kedlayaAlgebraicityGeneralizedPower2017b}, the ring $\widehat{\scrO}_L$ (resp. $\scrO_{\bfC_p}$) is described as the ``completed integral closure'' of the field $\overline{\bfF}_p\pparen{t}$ (resp. $W\left(\overline{\bfF}_p\right)[p^{-1}]=\breve{\bfQ}_p$). By comparing \cite[Theorem 11.12]{kedlayaAlgebraicityGeneralizedPower2017b} with \cite[Definition 6.3]{kedlayaAlgebraicityGeneralizedPower2017b}, we believe that the ``completed integral closure'' of a valued field $F$ in a larger valued field $E$ in \cite{kedlayaAlgebraicityGeneralizedPower2017b} means the completion of the integral closure of $\scrO_F$ in $E$, rather than the completion of the integral closure (which is also the algebraic closure) of $F$ in $E$, which is the literal interpretation of ``completed integral closure''.
\end{remark}

As an application, we can prove that $\frakA$ can be approximated by a Cauchy sequence in $\overline{\bfQ}_p$:
\begin{proposition}\label{prop:57046}
	The $p$-adic Hahn series $\frakA=\sum_{k=1}^\infty p^{-1/p^k}$ lies in $\bfC_p$.
\end{proposition}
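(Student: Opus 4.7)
The plan is to invoke \Cref{coro:8415}(2) with the explicit Abhyankar series $\fraka=\sum_{k=1}^\infty t^{-1/p^k}\in\overline{\bfF}_p\pparen{t^\bfQ}$, which is exactly the characteristic-$p$ shadow of $\frakA$ under the bijection $\sum[c_q]p^q\leftrightarrow \sum c_q t^q$ (in both cases all coefficients $c_q$ equal $1$).

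First I would observe that $v_p(\frakA)=-1/p<0$, so $\frakA$ itself is not integral; the natural target is to show that $p\frakA\in\scrO_{\bfC_p}$, from which $\frakA\in\bfC_p$ follows upon inverting $p$. Under the correspondence above, $p\frakA$ matches $t\fraka=\sum_{k=1}^\infty t^{1-1/p^k}\in\overline{\bfF}_p\pparen{t^\bfQ}$, so by \Cref{coro:8415}(2) it suffices to prove that $t\fraka$ lies in $\widehat{\scrO}_L$, and in fact the much stronger statement that $t\fraka$ lies in the integral closure $\scrO_L$ of $\overline{\bfF}_p\bbrac{t}$ inside $\overline{\bfF}_p\pparen{t^\bfQ}$.

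For this I would first recover Abhyankar's original identity by a one-line calculation in characteristic $p$:
$$\fraka^p=\sum_{k=1}^\infty t^{-1/p^{k-1}}=t^{-1}+\fraka,$$
the reindexing being legitimate since all exponents $-1/p^{k-1}$ are distinct and the supports are well-ordered. Multiplying both sides by $t^p$ and rearranging gives the monic integral equation
$$(t\fraka)^p-t^{p-1}(t\fraka)-t^{p-1}=0$$
with coefficients in $\overline{\bfF}_p[t]\subset\overline{\bfF}_p\bbrac{t}$, which witnesses $t\fraka\in\scrO_L\subset\widehat{\scrO}_L$. Applying \Cref{coro:8415}(2) yields $p\frakA\in\scrO_{\bfC_p}$, so $\frakA\in\bfC_p$.

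There is no real obstacle here beyond bookkeeping: the computation $\fraka^p=t^{-1}+\fraka$ is immediate in characteristic $p$, and the only mildly delicate point is the clearing-denominators step $\frakA\mapsto p\frakA$ (equivalently $\fraka\mapsto t\fraka$), which is needed because \Cref{coro:8415}(2) describes the valuation ring $\scrO_{\bfC_p}$ rather than the field $\bfC_p$ and because $\fraka$ itself fails to be integral over $\overline{\bfF}_p\bbrac{t}$. I would make this last point explicit so that the reader sees why the naive application of \Cref{coro:8415}(2) to $\frakA$ directly does not work.
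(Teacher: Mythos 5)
Your proposal is correct and follows essentially the same route as the paper: both pass to $p\frakA$, identify its characteristic-$p$ shadow $\sum_{k=1}^\infty t^{1-1/p^k}$ as a root of the monic polynomial $X^p-t^{p-1}X-t^{p-1}$ over $\overline{\bfF}_p\bbrac{t}$, and conclude via \Cref{coro:8415}(2). Your write-up merely spells out the Frobenius computation $\fraka^p=t^{-1}+\fraka$ and the reason for clearing denominators, which the paper leaves implicit.
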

\begin{proof}
	One can verify by direct calculation that the Hahn series $\sum_{k=1}^\infty t^{1-1/p^k}$ is a root of the polynomial
	$$X^p-t^{p-1}X-t^{p-1}\in \overline{\bfF}_p\bbrac{t}[X].$$
	By \Cref{coro:8415}, we conclude that $p\cdot\frakA=\sum_{k=1}^\infty p^{1-1/p^k}$ lies in $\bfC_p$, and the result follows.
\end{proof}
\begin{remark}
	The proof of \Cref{prop:57046} is not effective, i.e. it does not provide an explicit Cauchy sequence in $\overline{\bfQ}_p$ that converges to $\frakA$. By Kedlaya's transfinite Newton algorithm (cf. \cite[Theorem 1]{kedlayaPowerSeriesPAdic2001}, \cite[Section 2.2]{wangUniformizerFalseTate2021}), we mention two examples of $p$-adic algebraic numbers, whose $p$-adic distance to $\frakA$ is less than $1$:
	\begin{enumerate}
		\item  There exists a root $\alpha$ of the Artin-Schreier polynomial $X^p-X-p^{-1}\in \bfQ_p[X]$, which is the $p$-adic analogue of \Cref{eq:44045}, such that
		      $$\alpha= \sum_{k=1}^\infty p^{-1/p^k}+p^{1/p-1/p^2}+\text{ terms with higher valuation},$$
		      i.e. $v_p(\alpha-\frakA)=\frac{1}{p}-\frac{1}{p^2}$.
		\item When $p\geq 3$, then \cite{wangTruncatedExpansion} shows that for any integer $n\geq 2$, there exists a $p^n$-th root of unity $\zeta_{p^n}$ with expansion
		      \begin{align*}
			      \zeta_{p^n}= & \sum_{k=0}^{p-1}\frac{\zeta_{2(p-1)}^k}{[k!]}p^{\frac{k}{p^{n-1}(p-1)}} + \zeta_{2(p-1)}p^{\frac{1}{p^{n-2}(p-1)}}\sum_{k=n}^\infty p^{-1/p^k} \\
			                   & \quad +\zeta_{2(p-1)}^2p^{\frac{1}{p^{n-2}(p-1)}+\frac{1}{p^n(p-1)}} +\text{ terms with higher valuation},
		      \end{align*}
		      where $\zeta_{2(p-1)}$ is a fixed primitive $2(p-1)$-th root of unity.
		      If we set
		      $$\beta_n\coloneqq \left(\zeta_{2(p-1)}^2\cdot p^{\frac{1}{p^{n-2}(p-1)}}\right)^{-1}\cdot \left(\zeta_{p^n}-\sum_{k=0}^{p-1}\frac{\zeta_{2(p-1)}^k}{[k!]}p^{\frac{k}{p^{n-1}(p-1)}}\right)+\sum_{k=1}^{n-1}p^{-1/p^k},$$
		      then
		      $$v_p(\beta_n-\frakA)=\frac{1}{p^n(p-1)}.$$
	\end{enumerate}
\end{remark}

\section{$\bfQ_p$-transcendence of $\frakA$}
To show that $\frakA$ is transcendental over $\bfQ_p$, we prove by contradiction by assuming that $\frakA$ is the root of a polynomial $f$ with $\bfZ_p$ coefficients. Our strategy is straightforward: we are going to show that there are some terms in the multinomial expansion of the leanding term of $f(\frakA)$ is impossible to be cancelled out by any other terms in the whole expansion. To rigiously carry out this idea, we need some preparations.

\begin{definition}Let $\bbI\coloneqq \bigoplus_{\bfN_{>0}}\bfN$.
	\begin{enumerate}
		\item Let $\lambda\colon \bbI\lto \bfQ,\ (a_k)_{k\geq 1}\longmapsto \sum_{k=1}^\infty -\frac{a_k}{p^k}$.
		\item Let $\Sigma\colon \bbI\lto \bfN,\ (a_k)_{k\geq 1}\longmapsto \sum_{k=1}^\infty a_k$.
		\item Let $\kappa\colon \bbI\lto \bfN,\ (a_k)_{k\geq 1}\longmapsto \max\{k\geq 1\mid x_k>p-1\}$, where we set $\max\emptyset=0$. This always makes sense since for every $\underline{a}=(a_k)_{k\geq 1}\in\bbI$, there are only finitely many $k$'s with $a_k\neq 0$.
		\item We call an element $\underline{a}\in\bbI$ \textbf{reduced}, if $\kappa(\underline{a})=0$, i.e. $0\leq a_k\leq p-1$ for all $k\geq 1$.
		\item For any $\underline{a},\underline{b}\in \bbI$, write $\underline{a}\sim \underline{b}$ if $\lambda(\underline{a})-\lambda(\underline{b})\in\bfZ$. It is easy to see that $\sim$ is an equivalence relation.
	\end{enumerate}
\end{definition}
With above notations, we can write the multinomial expansion of $\frakA^i$, $i=0,\cdots,n+1$ as
\begin{equation}
	\frakA^i=\sum_{\substack{\underline{k}\in \bbI\\\Sigma(\underline{k})=i}}s_i\binom{i}{\underline{k}}p^{\lambda(\underline{k})}.
\end{equation}
Intuitively, the ``cancellation'' could only happen between terms $p^{\lambda(\underline{a})}$ and $p^{\lambda(\underline{b})}$ with $\underline{a}\sim \underline{b}$. Hence we need to understand the equivalence classes of $\sim$ in $\bbI$. The following lemma shows that each equivalence class contains a unique reduced element, which plays the role of the ``canonical representative'' of the equivalence class.

\begin{lemma}\label{lem:47600}\leavevmode
	\begin{enumerate}
		\item For every $\underline{a}\in \bbI$, there exists a unique reduced element $\underline{a}_{\opn{red}}\in \bbI$ such that $\underline{a}\sim \underline{a}_{\opn{red}}$.
		\item One has $\Sigma(\underline{a}_{\opn{red}})\leq \Sigma(\underline{a})$. The equality holds if and only if $\underline{a}$ is reduced, i.e. $\underline{a}=\underline{a}_{\opn{red}}$.
	\end{enumerate}
\end{lemma}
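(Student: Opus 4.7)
The plan is to interpret each $\underline{a} = (a_k)_{k \geq 1} \in \bbI$ as the (possibly non-standard) base-$p$ expansion of the non-negative rational $r(\underline{a}) \coloneqq \sum_{k \geq 1} a_k/p^k = -\lambda(\underline{a})$. Under this identification, $\underline{a} \sim \underline{b}$ becomes $r(\underline{a}) - r(\underline{b}) \in \bfZ$, and the reduced elements of $\bbI$ correspond bijectively to the standard finite base-$p$ expansions of rationals in $[0, 1) \cap \bfZ[p^{-1}]$.

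For part (1), I set $m \coloneqq \lfloor r(\underline{a})\rfloor \in \bfN$ and expand the fractional part $r(\underline{a}) - m \in [0,1) \cap \bfZ[p^{-1}]$ uniquely as $\sum_{k=1}^N b_k/p^k$ with digits $0 \leq b_k \leq p-1$, where $N$ is any integer exceeding the support of $\underline{a}$. This gives $\underline{a}_{\opn{red}} \coloneqq (b_k)_{k \geq 1}$, which satisfies $\underline{a} \sim \underline{a}_{\opn{red}}$ by construction. For uniqueness, any two reduced elements have $r$-values in $[0,1)$, so if they are equivalent their difference lies in $(-1,1) \cap \bfZ = \{0\}$, and uniqueness of finite base-$p$ expansions forces them to coincide.

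For part (2), I clear denominators: pick $N$ exceeding the supports of $\underline{a}$ and $\underline{a}_{\opn{red}}$, and set $A \coloneqq p^N r(\underline{a}) = \sum_{k=1}^N a_k p^{N-k} \in \bfN$. The euclidean division $A = m p^N + B$ with $0 \leq B < p^N$ exhibits $b_1, \ldots, b_N$ as the standard base-$p$ digits of $B$, so writing $s_p$ for the base-$p$ digit sum one has
$$s_p(A) = s_p(B) + s_p(m) = \Sigma(\underline{a}_{\opn{red}}) + s_p(m).$$
On the other hand, the classical ``carrying does not increase the digit sum'' principle gives $\Sigma(\underline{a}) = \sum_k a_k \geq s_p(A)$: each carry operation $(a_k, a_{k-1}) \mapsto (a_k - p, a_{k-1} + 1)$ preserves $A$ and strictly decreases the coefficient sum by $p - 1$, with the process terminating precisely when every coefficient lies in $[0, p-1]$. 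Combining these two estimates yields $\Sigma(\underline{a}) \geq \Sigma(\underline{a}_{\opn{red}}) + s_p(m) \geq \Sigma(\underline{a}_{\opn{red}})$. Equality throughout forces $s_p(m) = 0$ (hence $m = 0$) and forbids any carry (hence every $a_k$ already lies in $[0, p-1]$), so $\underline{a}$ is reduced; the converse is immediate from uniqueness in part (1).

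No deep obstacle is anticipated; the only bookkeeping subtlety is the interaction between the ``leading'' index $k = 1$, where a carry overflows into the integer part, and the indices $k \geq 2$, where a carry stays inside $\bbI$. The factorization $A = m p^N + B$ absorbs this distinction cleanly, turning the two-part statement into a one-line consequence of the identity for $s_p(A)$.
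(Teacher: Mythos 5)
Your proof is correct and rests on the same mechanism as the paper's: base-$p$ carries preserve the equivalence class while each one drops the coefficient sum by $p-1$, and uniqueness of finite base-$p$ expansions of rationals in $[0,1)$ pins down the reduced representative. You package this as a digit-sum identity for the integer $A=p^{N}\sum_{k}a_k p^{-k}$ after clearing denominators, whereas the paper runs an explicit induction on $\kappa(\underline{a})$ performing one carry at a time; the content is the same, and your factorization $A=mp^{N}+B$ handles the overflow at $k=1$ just as cleanly as the paper's case split between $n=0$ and $n\geq 1$.
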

\begin{proof}
	We prove by induction on $\kappa(\underline{a})$ that there exists a reduced element $\underline{a}_{\opn{red}}\in \bbI$ such that $\underline{a}\sim \underline{a}_{\opn{red}}$ and $\Sigma(\underline{a}_{\opn{red}})\leq \Sigma(\underline{a})$.

	If $\kappa(\underline{a})=0$, then we can take $\underline{a}_{\opn{red}}=\underline{a}$. Suppose that $\kappa(\underline{a})=n+1$ for certain $n\in\bfN$ and the claim holds for all elements in $\bbI$ with $\kappa$-value not greater than $n$.

	Since $a_{n+1}>p-1$, we can write $a_{n+1}=r+p\cdot d$, with $r\in \{0,\cdots,p-1\}$ and $d\in\bfN_{>0}$. Let $\underline{a}'\coloneqq (a_k')_{k\geq 1}\in \bbI$ be defined as follows:
	$$a_k'\coloneqq \begin{cases}
			a_k,   & \text{ if } k\neq n,n+1;               \\
			r,     & \text{ if } k=n+1;                     \\
			a_n+d, & \text{ if } n \geq 1 \text{ and } k=n.
		\end{cases}$$
	Then one can check by direct calculation that
	$$\lambda(\underline{a})=-\frac{r+p\cdot d}{p^{n+1}}+\sum_{k\neq n+1}-\frac{a_k}{p^k}=-\frac{r}{p^{n+1}}-\frac{a_n+d}{p^n}+\sum_{k\neq n, n+1}-\frac{a_k}{p^k}=\lambda(\underline{a}')$$
	when $n\geq 1$, and
	$$\lambda(\underline{a})=-\frac{r+p\cdot d}{p}+\sum_{k\geq 2}-\frac{a_k}{p^k}=\lambda(\underline{a}')-d$$
	when $n=0$. In both cases, we have $\underline{a}\sim \underline{a}'$ and $\Sigma(\underline{a}')\leq n$. By the induction hypothesis, there exists a reduced element $\underline{a}_{\opn{red}}'\in \bbI$ such that $\underline{a}'\sim \underline{a}_{\opn{red}}'$ and $\Sigma(\underline{a}_{\opn{red}}')\leq \Sigma(\underline{a}')$. Hence we have $\underline{a}\sim \underline{a}_{\opn{red}}'$ and $\Sigma(\underline{a}_{\opn{red}}')\leq \Sigma(\underline{a})$. This finishes the proof of the existence.

	Now take two reduced elements $\underline{x}$ and $\underline{y}$ with $\underline{x}\sim\underline{y}$. Since
	$$-1=-\sum_{k=1}^{\infty}\frac{p-1}{p^k}< \lambda(\underline{x}), \lambda(\underline{y})\leq 0,$$
	the condition $\underline{x}\sim \underline{y}$ implies that $\lambda(\underline{x})=\lambda(\underline{y})$. By viewing $\lambda(\underline{x})$ and $\lambda(\underline{y})$ as floating point numbers in base $p$, a digit-by-digit comparison shows that they are equal. This finishes the proof of the uniqueness.

	For the last statement, suppose that $\Sigma(\underline{a}_{\opn{red}})=\Sigma(\underline{a})$. If $\Sigma(\underline{a})=0$, then $\underline{a}=\underline{a}_{\opn{red}}=(0,0,\cdots)$. Now we assume $\Sigma(\underline{a})>0$ and $\underline{a}$ is not reduced. Then the construction in the first part of the proof shows that there exists $\underline{a}'\in\bbI$ with $\underline{a}'\sim\underline{a}$ and $\Sigma(\underline{a}')<\Sigma(\underline{a})$. As a result, we obtain $\Sigma(\underline{a}_{\opn{red}})\leq \Sigma(\underline{a}')<\Sigma(\underline{a})$, which contradicts the assumption.
\end{proof}

\begin{corollary}
	For every $\underline{a},\underline{b}\in \bbI$, $\underline{a}\sim \underline{b}$ if and only if $\underline{a}_{\opn{red}}=\underline{b}_{\opn{red}}$.
\end{corollary}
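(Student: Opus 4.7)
The statement is a direct consequence of \Cref{lem:47600}(1), using only that $\sim$ is an equivalence relation.

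My plan is to handle the two implications separately. For the ``if'' direction, assume $\underline{a}_{\opn{red}} = \underline{b}_{\opn{red}}$. By the existence part of \Cref{lem:47600}(1), we have $\underline{a} \sim \underline{a}_{\opn{red}}$ and $\underline{b} \sim \underline{b}_{\opn{red}}$, so transitivity and symmetry of $\sim$ immediately give $\underline{a} \sim \underline{b}$.

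For the ``only if'' direction, assume $\underline{a} \sim \underline{b}$. Combining this with $\underline{a} \sim \underline{a}_{\opn{red}}$ and $\underline{b} \sim \underline{b}_{\opn{red}}$ yields $\underline{a}_{\opn{red}} \sim \underline{b}_{\opn{red}}$. Since both $\underline{a}_{\opn{red}}$ and $\underline{b}_{\opn{red}}$ are reduced, the uniqueness part of \Cref{lem:47600}(1) (applied, say, to the element $\underline{a}_{\opn{red}}$, whose unique reduced representative is itself) forces $\underline{a}_{\opn{red}} = \underline{b}_{\opn{red}}$.

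There is no real obstacle here; the corollary is essentially a restatement of existence and uniqueness of the reduced representative packaged as an equivalence. The only thing to be careful about is citing the correct part of the previous lemma for each direction, and noting explicitly that $\sim$ being an equivalence relation (already observed in the definition) is what makes the chain of equivalences legitimate.
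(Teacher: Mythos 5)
Your proof is correct and is exactly the standard argument the paper leaves implicit: the corollary is stated without proof as an immediate consequence of Lemma \ref{lem:47600}(1), and your use of existence for the ``if'' direction and uniqueness (together with transitivity of $\sim$) for the ``only if'' direction is precisely what is intended.
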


Now we are ready to prove the main result of this article:
\begin{theorem}\label{thm:64044}
	The $p$-adic Hahn series $\frakA=\sum_{k=1}^\infty p^{-1/p^k}$ is transcendental over $\bfQ_p$.
\end{theorem}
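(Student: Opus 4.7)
The plan is to argue by contradiction. Assume $\frakA$ is algebraic over $\bfQ_p$; clearing denominators of a minimal polynomial gives $f(X)=\sum_{i=0}^n s_i X^i\in\bfZ_p[X]$ with $s_n\ne 0$, $n\ge 1$, and $f(\frakA)=0$. By \Cref{lem:12387}, the element $f(\frakA)\in\scrO_{\breve{\bfQ}_p}\pparen{p^\bfQ}$ admits a unique decomposition $\sum_{q\in\bfQ\cap[0,1)}c_q p^q$ with $c_q\in\breve{\bfQ}_p$, so every $c_q$ must vanish. It therefore suffices to exhibit a single $q$ for which $c_q\ne 0$.

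The first step is to expand each $\frakA^i$ multinomially as
$$\frakA^i=\sum_{\Sigma(\underline{k})=i}\binom{i}{\underline{k}}p^{\lambda(\underline{k})}$$
inside the spherically complete field $\scrO_{\breve{\bfQ}_p}\pparen{p^\bfQ}$, and then to regroup the resulting terms of $f(\frakA)$ according to the equivalence classes of $\sim$. Using the preceding corollary, for any nonzero reduced $\underline{k}^*\in\bbI$ the coefficient at $q=\lambda(\underline{k}^*)+1\in(0,1)$ equals
$$c_q=\sum_{\substack{\underline{k}\sim\underline{k}^*\\\Sigma(\underline{k})\le n}}s_{\Sigma(\underline{k})}\binom{\Sigma(\underline{k})}{\underline{k}}p^{\lambda(\underline{k})-q},$$
with each summand lying in $\bfZ_p$: indeed, the reduction process only increases $\lambda$, so $\lambda(\underline{k})-q=-1-m\in\bfZ_{\le-1}$ for some $m\ge 0$, and the multinomial coefficients are positive integers.

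The key step is to choose $\underline{k}^*$ to be any reduced element with $\Sigma(\underline{k}^*)=n$; such an element always exists for $n\ge 1$, for instance $\underline{k}^*=(\underbrace{p-1,\dots,p-1}_{\ell},r,0,\dots)$ with $n=\ell(p-1)+r$, $0\le r<p$. By \Cref{lem:47600}(2), every $\underline{k}\sim\underline{k}^*$ satisfies $\Sigma(\underline{k})\ge\Sigma(\underline{k}^*)=n$, with equality if and only if $\underline{k}$ is reduced, and then uniqueness forces $\underline{k}=\underline{k}^*$. Hence the sum for $c_q$ collapses to the single term $c_q=s_n\binom{n}{\underline{k}^*}p^{-1}$, which is a nonzero element of $\breve{\bfQ}_p$, contradicting $f(\frakA)=0$.

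The hard part will be justifying the formal regrouping of the multinomial expansion into the displayed expression for $c_q$: one must verify that the double-indexed family $\{s_i\binom{i}{\underline{k}}p^{\lambda(\underline{k})}\}$ is summable in $\scrO_{\breve{\bfQ}_p}\pparen{p^\bfQ}$ and that summation commutes with the projection to the $q$-slot of \Cref{lem:12387}. The estimate $\lambda(\underline{k})\ge-\Sigma(\underline{k})/p$, combined with uniqueness of reduced representatives in each equivalence class, guarantees that each $q$-slot receives contributions from only finitely many $\underline{k}$ with $\Sigma(\underline{k})\le n$, so the combinatorial heart of the argument rests cleanly on \Cref{lem:47600} and no deeper analytic input is required.
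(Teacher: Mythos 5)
Your proposal is correct and follows essentially the same route as the paper: contradiction via the multinomial expansion of $f(\frakA)$, regrouping by the equivalence relation $\sim$, applying \Cref{lem:12387} to kill each class's coefficient, and then isolating a reduced $\underline{k}^*$ of maximal weight $\Sigma(\underline{k}^*)=\deg f$ so that \Cref{lem:47600} collapses that class to the single leading term (the paper takes $\underline{k}^*=(1,\dots,1,0,\dots)$, but any reduced element of the right weight works, as you note). The only slip is cosmetic: the summands $s_{\Sigma(\underline{k})}\binom{\Sigma(\underline{k})}{\underline{k}}p^{\lambda(\underline{k})-q}$ lie in $\breve{\bfQ}_p$ rather than $\bfZ_p$ since the exponent is $\le -1$, but \Cref{lem:12387} only requires $c_q\in\breve{\bfQ}_p$, so the argument is unaffected.
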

\begin{proof}
	Suppose the contrary that there exists $n\in\bfN$ and $s_0,\cdots,s_{n+1}\in\bfZ_p$ such that
	$$s_0+s_1\frakA+\cdots+s_n\frakA^n+s_{n+1}\frakA^{n+1}=0,$$
	with $s_0$ and $s_{n+1}$ nonzero. The multinomial theorem implies that
	$$0=\sum_{i=0}^{n+1}s_i\frakA^i=\sum_{i=0}^{n+1}\sum_{\substack{\underline{k}\in \bbI\\\Sigma(\underline{k})=i}}s_i\binom{i}{\underline{k}}p^{\lambda(\underline{k})}=\sum_{\substack{\underline{k}\in \bbI\\\Sigma(\underline{k})\leq n+1}}s_{\Sigma(\underline{k})}\binom{\Sigma(\underline{k})}{\underline{k}}p^{\lambda(\underline{k})}.$$
	If we group the terms on the right hand side according to the equivalence classes of $\sim$, we obtain
	\begin{equation}\label{eq:7708}
		\begin{aligned}
			0= & \sum_{\substack{\underline{k}_{\opn{red}}\in\bbI \\\underline{k}_{\opn{red}} \text{ reduced}}}\left(\sum_{\substack{\underline{k}\in\bbI\\\Sigma(\underline{k})\leq n+1\\\underline{k}\sim\underline{k}_{\opn{red}}}}s_{\Sigma(\underline{k})}\binom{\Sigma(\underline{k})}{\underline{k}}p^{\lambda(\underline{k})}\right)\\
			=  & \sum_{\substack{\underline{k}_{\opn{red}}\in\bbI \\\underline{k}_{\opn{red}} \text{ reduced}}}p^{\lambda(\underline{k}_{\opn{red}})+\delta_0}\left(\sum_{\substack{\underline{k}\in\bbI\\\Sigma(\underline{k})\leq n+1\\\underline{k}\sim\underline{k}_{\opn{red}}}}s_{\Sigma(\underline{k})}\binom{\Sigma(\underline{k})}{\underline{k}}p^{\lambda(\underline{k})-\lambda(\underline{k}_{\opn{red}})-\delta_0}\right),
		\end{aligned}
	\end{equation}
	where $$\delta_0=\begin{cases}
			0, & \text{ if } \underline{k}_{\opn{red}}=(0,0,\cdots); \\
			1, & \text{ otherwise}.
		\end{cases}$$

	Observe that for different reduced elements $\underline{k}_{\opn{red}}\in\bbI$, the values $\lambda(\underline{k}_{\opn{red}})+\delta_0\in[0,1)$ are all distinct, and if $\underline{k}\sim\underline{k}_{\opn{red}}$, then $\lambda(\underline{k})-\lambda(\underline{k}_{\opn{red}})-\delta_0\in\bfN$. Hence \Cref{lem:12387} allows us to conclude from \Cref{eq:7708} that
	\begin{equation}\label{eq:50446}
		\sum_{\substack{\underline{k}\in\bbI\\\Sigma(\underline{k})\leq n+1\\\underline{k}\sim\underline{k}_{\opn{red}}}}s_{\Sigma(\underline{k})}\binom{\Sigma(\underline{k})}{\underline{k}}p^{\lambda(\underline{k})-\lambda(\underline{k}_{\opn{red}})}=0
	\end{equation}
	for every reduced element $\underline{k}_{\opn{red}}\in\bbI$. If we take the reduced element
	$$\underline{k}^*\coloneqq (\overbrace{1,1,\cdots,1}^{n+1 \text{ times}},0,\cdots,)\in\bbI,$$
	then for every $\underline{k}\in\bbI$ with $\Sigma(\underline{k})\leq n+1$ and $\underline{k}\sim \underline{k}^*$, \Cref{lem:47600} implies that $\Sigma(\underline{k})=\Sigma(\underline{k}^*)=n+1$ and consequently $\underline{k}=\underline{k}^*$. Hence \Cref{eq:50446} can be specialized to the case of $\underline{k}_{\opn{red}}=\underline{k}^*$ as
	$$s_{n+1}\binom{n+1}{\underline{k}^*}=s_{n+1}\cdot (n+1)!=0,$$
	which contradicts the assumption that $s_{n+1}$ is nonzero.
\end{proof}
\begin{remark}
	One does not need to worry about the well-definedness of any of the infinite sums above, since for every exponent with value of the form $\lambda(\underline{k})$, there are only finitely many terms contributing to it (cf. \cite[Lemma 1]{poonenMAXIMALLYCOMPLETEFIELDS1993}).
\end{remark}

\begin{remark}
	After some necessary generalization of $\lambda$, $\Sigma$ and $\kappa$, the same idea can potentially be applied to show the $\bfQ_p$-transcendence of more general $p$-adic Hahn series.
\end{remark}

\section{Order type of $p$-adic algebraic numbers}
In \cite{lampertAlgebraicPadicExpansions1986}, Lampert asked about the possible order type of the support of the expansion of $p$-adic algebraic numbers as $p$-adic Hahn series. Kedlaya showed in \cite[Section 4]{kedlayaPowerSeriesPAdic2001} that the order type can not exceed $\omega^\omega$. He also made the following remark at the end of same section:
\begin{framed}
	\texttt{ In fact, it is entirely possible that the answer to Lampert's question is that only finite orders, $\omega$ and $\omega^\omega$ can occur.}
\end{framed}
We simply call it \textbf{Kedlaya's prediction}. Note that this prediction indicates that there does not exist any $p$-adic algebraic number with bounded support of order type $\omega$: if $\alpha=\sum_{k=0}^\infty[c_k]p^{r_k}\in \scrO_{\breve{\bfQ}_p}\pparen{p^\bfQ}$ is a $p$-adic algebraic number with $\opn{diam}(\opn{Supp}(\alpha))<N$ for certain positive integer $N$, then
$$\frac{\alpha}{1-p^N}=\sum_{k=0}^\infty[c_k]p^{r_k}+\sum_{k=0}^\infty[c_k]p^{r_k+N}+\sum_{k=0}^\infty[c_k]p^{r_k+2N}+\cdots$$
is a $p$-adic algebraic number with order type $\omega^2$, which contradicts Kedlaya's prediction.

Since currently no $p$-adic algebraic number with bounded support of infinite order type is known, and even simple $p$-adic Hahn series like $\frakA$ is transcendental over $\bfQ_p$, we raise the following conjecture about the order type of $p$-adic algebraic numbers:
\begin{conjecture}\label{conj:finsupp}
	If a $p$-adic algebraic number has bounded support, then its support is finite.
\end{conjecture}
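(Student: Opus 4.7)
The plan is to derive the conjecture from Kedlaya's prediction, which is the content of the forthcoming \Cref{prop:46136}, by generalizing the illustrative calculation sketched in the paragraph just before \Cref{conj:finsupp}. Suppose for contradiction that $\alpha\in\scrO_{\breve{\bfQ}_p}\pparen{p^\bfQ}$ is $p$-adic algebraic with support $S$ bounded but infinite, and let $\beta$ denote the order type of $S$, so $\beta\geq\omega$. First I would fix an integer $N>\opn{diam}(S)$ and set
$$\widetilde{\alpha}\coloneqq\frac{\alpha}{1-p^N}=\sum_{j=0}^{\infty}\alpha\cdot p^{jN},$$
which is still algebraic over $\bfQ_p$ since $1-p^N\in\bfQ_p^{\times}$. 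By the choice of $N$, the translates $S,\,S+N,\,S+2N,\ldots$ are pairwise disjoint and arranged block-wise in strictly increasing order, so $\opn{Supp}(\widetilde{\alpha})$ is order-isomorphic to the ordinal product $\beta\cdot\omega$.

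Next I would invoke a short ordinal-arithmetic calculation. Writing $\beta$ in Cantor normal form with leading term $\omega^{\gamma}$ (here $\gamma\geq 1$ because $\beta\geq\omega$), one computes $\beta\cdot\omega=\omega^{\gamma+1}$. In particular $\beta\cdot\omega$ equals neither $\omega$ (this would force $\gamma=0$) nor $\omega^{\omega}$ (this would require the equation $\gamma+1=\omega$, which has no solution in the ordinals), and it is certainly not a finite ordinal. Hence the order type of $\opn{Supp}(\widetilde{\alpha})$ is not contained in $\{\text{finite},\,\omega,\,\omega^{\omega}\}$, contradicting Kedlaya's prediction and establishing the conditional result.

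The main obstacle is evidently that Kedlaya's prediction itself remains open, so this route yields only a conditional proof. An unconditional attack would most naturally attempt to adapt the ``canonical representative'' combinatorics used in the proof of \Cref{thm:64044}: given a hypothetical minimal polynomial $P\in\bfZ_p[X]$ of $\alpha$, one would expand $P(\alpha)=0$ multinomially in the standard expansion of $\alpha$, set up an equivalence relation on multi-indices analogous to $\sim$ that respects the specific support $S$, and identify an extremal multi-index whose contribution cannot be cancelled by any other term in its equivalence class. The difficulty is that, in contrast to the rigid exponents $-1/p^k$ of $\frakA$, the elements of a general bounded infinite $S$ need not enjoy any self-similar or geometric structure, so both the definition of the correct equivalence relation and the extraction of an uncancellable term become considerably more delicate; this is where I expect the true obstacle to lie.
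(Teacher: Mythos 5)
Your conditional argument is exactly the paper's own treatment: the statement is only a conjecture, and the paper's Proposition~\ref{prop:46136} derives it from Kedlaya's prediction via the same $\alpha/(1-p^N)$ construction, the same identification of the new support's order type as the ordinal product with $\omega$, and the same Cantor-normal-form computation showing $\beta\cdot\omega=\omega^{\gamma+1}$ cannot be finite, $\omega$, or $\omega^\omega$ unless the original support is finite. Your closing remarks on the obstacles to an unconditional proof are accurate but go beyond what the paper establishes.
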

In fact, this conjecture is also implicated by Kedlaya's prediction:
\begin{proposition}\label{prop:46136}
	Kedlaya's prediction implies \Cref{conj:finsupp}.
\end{proposition}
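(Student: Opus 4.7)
The plan is to push the argument sketched immediately before \Cref{conj:finsupp} one step further, so that it excludes every infinite order type permitted by Kedlaya's prediction, not just $\omega$. Concretely, I would argue by contradiction: suppose $\alpha\in\scrO_{\breve{\bfQ}_p}\pparen{p^\bfQ}$ is a $p$-adic algebraic number whose support $S\coloneqq \opn{Supp}(\alpha)$ is bounded and infinite, and fix an integer $N\in\bfN_{>0}$ with $\opn{diam}(S)<N$. Under Kedlaya's prediction, the order type $\tau$ of $S$ must belong to $\{\omega,\omega^\omega\}$.

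The key construction reuses the trick already exhibited in the excerpt: pass to
$$\beta\coloneqq \frac{\alpha}{1-p^N}=\sum_{k\geq 0}\alpha\cdot p^{kN},$$
which is again $p$-adic algebraic because $1-p^N\in\bfQ_p^\times$. The diameter hypothesis forces the shifts $S+kN$ ($k\in\bfN$) to be pairwise disjoint and arranged in consecutive non-overlapping intervals, so that $\sup(S+kN)<\min(S+(k+1)N)$ for every $k$. Consequently the displayed expression is the standard expansion of $\beta$, and
$$\opn{Supp}(\beta)=\bigsqcup_{k\geq 0}(S+kN)$$
is well-ordered of order type $\tau\cdot\omega$.

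To conclude, I would invoke the elementary ordinal arithmetic $\omega\cdot\omega=\omega^2$ and $\omega^\omega\cdot\omega=\omega^{\omega+1}$. Neither value belongs to the admissible set $\{\text{finite},\omega,\omega^\omega\}$ predicted by Kedlaya, so in either case $\beta$ is a $p$-adic algebraic number whose support has a forbidden order type, contradicting Kedlaya's prediction. (For $\tau=\omega^\omega$, one in fact already contradicts the unconditional upper bound $\omega^\omega$ of \cite[Section 4]{kedlayaPowerSeriesPAdic2001}, although this stronger conclusion is not needed.) Therefore $S$ must be finite, which is exactly \Cref{conj:finsupp}.

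I do not expect a real obstacle: the proof is essentially the observation of the excerpt applied to both of the infinite cases allowed by the prediction. The only points deserving care are verifying that the shifts $S+kN$ are pairwise disjoint and arranged in successive intervals (immediate from $\opn{diam}(S)<N$), checking that their disjoint union is therefore well-ordered of order type $\tau\cdot\omega$, and noting the ordinal identity $\omega^\omega\cdot\omega=\omega^{\omega+1}$; all three are standard.
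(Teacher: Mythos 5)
Your proof is correct and takes essentially the same route as the paper: both pass to $\alpha/(1-p^N)$, whose support has order type $\tau\cdot\omega$, and contradict Kedlaya's prediction. The only cosmetic difference is that you apply the prediction to $\tau$ itself first and check the two cases $\omega\cdot\omega=\omega^2$ and $\omega^\omega\cdot\omega=\omega^{\omega+1}$ directly, whereas the paper keeps $\tau$ arbitrary and uses its Cantor normal form to see that $\tau\cdot\omega=\omega^{\beta_1+1}$ has a successor exponent and hence cannot be $\omega$ or $\omega^\omega$ unless $\tau$ is finite; both finishes are valid.
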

\begin{proof}
	Suppose $x$ is a $p$-adic algebraic number with bounded support of order type $\alpha$. Let $N$ be a positive integer such that $\opn{diam}(\opn{Supp}(x))<N$. Similar to the argument above, we can show that
	$$\frac{x}{1-p^N}=\sum_{t=0}^\infty x\cdot p^{Nt}$$
	is a $p$-adic algebraic number with order type $\alpha\cdot \omega$. If we write $\alpha$ as its Cantor normal form (cf. \cite[Chapter XIV, §19, Theorem 2]{zbMATH03212886})
	$$\alpha=\omega ^{\beta _{1}}c_{1}+\omega ^{\beta _{2}}c_{2}+\cdots +\omega ^{\beta _{k}}c_{k},$$
	where $\beta_1>\beta_2>\cdots>\beta_k\geq 0$ are ordinals and $c_i$ are positive integers, then the Cantor normal form of $\alpha\cdot\omega$ is $\omega^{\beta_1+1}$ (cf. \cite[Chapter XIV, §19, Exercise 4]{zbMATH03212886}).
	By Kedlaya's prediction, we split the proof into 3 cases:
	\begin{enumerate}
		\item If $\alpha\cdot\omega$ is finite, then clearly $\alpha=0$;
		\item If $\alpha\cdot \omega=\omega$, then $\beta_1=0$, and consequently $\alpha=c_1$ is finite;
		\item If $\alpha\cdot \omega=\omega^\omega$, then $\omega^{\beta_1+1}=\omega^\omega$, which is impossible since $\omega$ is not a successor ordinal.
	\end{enumerate}
\end{proof}

\backmatter
\printbibliography
\end{document}